\documentclass[a4paper,10pt]{amsart}

\usepackage{times}
\usepackage{amsmath,amssymb}
\usepackage{color}
\usepackage[normalem]{ulem}
\usepackage{a4wide}

\newtheorem{lemma}{Lemma}

\newtheorem{proposition}{Proposition}
\newtheorem{observation}{Observation}
\newtheorem{corollary}{Corollary}
\newtheorem{definition}{Definition}
\newtheorem{example}{Example}



\def\UpperBoundDimensionSeperator{13\,368}



\def\myemptyset{\varnothing}

\begin{document}

\title{Dimension of the Lisbon voting rules in the EU Council: a challenge and new world record}
\author{Sascha Kurz}
\address{Department of Mathematics, University of Bayreuth, 95440 Bayreuth, Germany.\\ Tel.: +49-921-557353, Fax: +49-921-557352, sascha.kurz@uni-bayreuth.de }
\author{Stefan Napel}
\address{Department of Economics, University of Bayreuth, and PCRC, University of Turku}

\begin{abstract}
The new voting system of the Council of the European Union cannot be represented as the intersection of six or fewer weighted games, i.e., 
its dimension is at least 7. This sets a new record for real-world voting bodies. A heuristic combination of different discrete optimization 
methods yields a representation as the intersection of {\UpperBoundDimensionSeperator} weighted games. Determination of the exact dimension 
is posed as a challenge to the community. The system's Boolean dimension is proven to be 3.
  
  \medskip
  
  \noindent
  \textbf{Keywords:} simple games, weighted games, dimension, real-world voting systems, set covering problem, computational challenges\\
  \textbf{MSC:} 90C06, 05B40, 91B12, 91A12\\
  \textbf{JEL:} C71, C63, D72  
\end{abstract}

\maketitle

\section{Introduction}
\label{sec:Intro}
\noindent
Consider a group or committee whose members jointly decide whether to accept or reject a proposal (or, more generally, any system which outputs 1 if a minimal set of binary conditions are true and 0 otherwise).
The mapping of 
given configurations of approving members to a collective {\lq\lq}yes{\rq\rq} (1) or {\lq\lq}no{\rq\rq} (0) defines a so-called \emph{simple game}. It can often be described by a weighted voting rule:
each member~$i$ gets a non-negative weight $w_i$; a proposal is accepted iff the weight sum of its supporters meets a given quota $q$. The simple game is then known as a \emph{weighted game}.

Many real-word decision rules can be represented as weighted games, but not all. It is sometimes necessary to consider the intersection of multiple weighted games, or their union, in order to correctly delineate all 
acceptance and rejection configurations.
The minimal number of weighted
games whose intersection represents a given simple game is known as its \emph{dimension} \cite{MulticameralRepresentation}; the corresponding number in the disjunctive case is its \emph{co-dimension} \cite{NotionsOfDimension}.
The (co-)dimension of a rule which involves finitely many decision makers is finite, but can grow exponentially in the group size \cite[Thm.~1.7.5]{SimpleGamesBook}. It is NP-hard to determine the exact dimension of a given game \cite{DimensionNPhard}.

Taylor \cite{NoHighDimension} remarked in 1995 that he did not know of any real-world voting system of dimension $3$ or higher.
Amendment of the Canadian constitution \cite{CanadianConstitution} and the US federal legislative system \cite{MulticameralRepresentation} are classical examples of dimension 2.
More recently, systems of dimension $3$ have been adopted by the Legislative Council of Hong Kong \cite{LEGCO} and the Council of the European Union (EU~Council) 
under its Treaty of Nice rules \cite{DimensionEUNice}:
until late 2014, each EU member 
implicitly wielded a 3-dimensional vector-valued weight and 
proposals were accepted iff their supporters met a 3-dimensional quota. 
Real-world cases with dimension $4$ or more, however, have not been discovered yet (at least to our knowledge).
This suggests that determining the dimension of a given simple game might be a hard problem in theory but not in practice.

We establish that the situation is changed by the new voting rules of the EU Council, which were agreed to apply from Nov.\ 2014 on in the Treaty of Lisbon (with a transition period).
They involve a dual majority requirement where (i) at least 55\% of the EU member states need to support a motion and (ii) these supporters shall represent at least 65\% of the total EU population.
However, (iii) the {\lq\lq}no{\rq\rq}-votes of at least four EU member states are needed in order to block a proposal.
A coalition of the 25 smallest among the 28 EU members fails to meet provision (ii) but is still winning due to (iii).
We show that representing these rules as the union of one weighted game with the intersection of two more involves no redundancy, even for moderate changes of the current populations.
So the \emph{Boolean dimension} (see Def.~\ref{def_boolean_dimension}) of (i)--(iii) is 3, and robustly so.
Restricting representations to pure intersections or pure unions, however, increases the minimal number of weighted constituent games significantly.

We can prove that the dimension of the EU28's new voting rules is an integer between \textbf{$7$} and $\UpperBoundDimensionSeperator$; its co-dimension lies above $2000$.
This makes the EU28 a new record holder among real-world institutions.
The \emph{determination of the exact dimension} of voting rules in the EU~Council is an open computational challenge, which we here wish to present to a wider audience. It is related to the classical set covering problem in combinatorics and computer science.

The EU voting rules aside, the paper 
provides a general algorithmic approach for determining the dimension of simple games.
We combine combinatorial and algebraic techniques, exact and heuristic optimization methods in ways that are open to other applications and further refinements.
This contrasts with previously 
mostly tailor-made arguments for specific group decision rules. 



\section{Notation and definitions}
\noindent
We first introduce notation and some selected results on simple games;
\cite{SimpleGamesBook} is recommended for
a detailed treatment. 
Given a finite set $N=\{1,\dots,n\}$ of \emph{players}, a \emph{simple (voting) game} $v$ is a mapping $2^N\rightarrow\{0,1\}$ from the subsets of
$N$, called \emph{coalitions}, to $\{0,1\}$  (interpreted as a collective {\lq\lq}no{\rq\rq} and {\lq\lq}yes{\rq\rq})
which satisfies
$v(\myemptyset)=0$,
$v(N)=1$, and $v(S)\le v(T)$ for all $\myemptyset\subseteq S\subseteq T\subseteq N$.
Coalition $S\subseteq N$ is called \emph{winning} if $v(S)=1$ and \emph{losing} otherwise.
If $S$ is winning but all of its proper subsets are losing, then $S$ is called a \emph{minimal winning coalition}. Similarly, a losing coalition $T$ whose proper supersets are winning is called a \emph{maximal losing coalition}.
A simple game is more compactly 
characterized by its set $\mathcal{W}^m$ of minimal winning coalitions than by the corresponding set $\mathcal{W}$ of winning coalitions (or, equivalently, by  its set $\mathcal{L}^M$ of maximal losing coalitions rather than the set $\mathcal{L}$ of all losing coalitions).

Players of a simple game can often be ranked according to their `influence' or `desirability'.
Namely, if
$v(S\cup\{i\})\ge v(S\cup\{j\})$ for players $i,j\in N$ and all $S\subseteq N\backslash\{i,j\}$ then we write
$i\sqsupset j$ (or $j \sqsubset i$) and say that player~$i$ is \emph{at least as influential} as player~$j$. The case $i\sqsupset j$
and $j\sqsupset i$ is denoted as $i\,\square\,j$;  we then say that both players are \emph{equivalent}.
The $\square$-relation partitions the set of players into equivalence classes.
It is possible that
neither $i\sqsupset j$ nor $j\sqsupset i$ holds, i.e., players may be incomparable.
A simple game $v$ is called \emph{complete} if the binary relation $\sqsupset$ is complete, i.e., $i\sqsupset j$ or $j\sqsupset i$ for all $i,j\in N$.
Complete simple games form a proper subclass of simple games.

Given a complete simple game $v$, a minimal winning coalition $S$ is called \emph{shift-minimal winning} if
  $S\backslash\{i\}\cup\{j\}$ is losing for all $i\in S$ and all $j\in N\backslash S$ with $i\sqsupset j$
  but not $i\,\square\,j$, i.e., $S$ would become losing if any of its players~$i$ were replaced by a strictly less influential 
  player~$j$. Similarly, a maximal losing coalition $T$ is called \emph{shift-maximal losing} if
  $T\backslash\{i\}\cup\{j\}$ is winning for all $i\in S$ and 
  $j\in N\backslash S$ with $j\sqsupset i$
  but not $i\,\square\,j$.
A complete simple game is most compactly characterized by the partition of the players into equivalence classes and a description of either the shift-minimal winning or shift-maximal losing coalitions.

If there exist weights $w_i\in\mathbb{R}_{\ge 0}$
for all $i\in N$ and a quota $q\in\mathbb{R}_{>0}$ such that $v(S)=1$ iff $w(S):=\sum_{i\in S} w_i\ge q$
for all coalitions $S\subseteq N$ then we call the simple game $v$ \emph{weighted}.
Every weighted game is complete but the converse is false.
We call the vector $\left(q,w_1,\dots,w_n\right)$ a \emph{representation} of $v$
and write $v=\left[q;w_1,\dots,w_n\right]$.
If $v$ is weighted, there also exist representations such that all weights and the
quota are integers.
If $\sum_{i=1}^{n} w_i$ is minimal with respect to the integrality constraint, we speak of
a \emph{minimum sum integer representation} (see, e.g., \cite{MinimumSumIntegerRepresentation}).

If $v_1$, $v_2$ are weighted games with identical player set $N$ and respective sets of
winning coalitions $\mathcal{W}_1$ and $\mathcal{W}_2$ then the winning coalitions of $v_1\wedge v_2$ are given by $\mathcal{W}_1\cap\mathcal{W}_2$. The smallest number $k$ such that a simple game $v$ coincides with the intersection $v_1\wedge \ldots \wedge v_k$ of $k$ weighted games with identical player set is called the \emph{dimension} of $v$.
Similarly, the winning coalitions of $v_1\vee v_2$ are $\mathcal{W}_1\cup\mathcal{W}_2$, and the smallest number of weighted games whose union $v_1\vee \ldots \vee v_k$ coincides with a simple game $v$ is the \emph{co-dimension} of $v$.
Freixas and Puente have shown that there exists a complete simple game with dimension $k$ for every integer $k$ \cite{DimensionCSGwithMinimum}.
It is not known yet whether the dimension of a complete simple game is polynomially bounded in the number of its players or can grow exponentially (like for general simple games).

\begin{lemma} (cf.~\cite[Theorem 1.7.2]{SimpleGamesBook})
  \label{lemma_upper_bound_maximal_losing}
  The dimension of a simple game $v$ is bounded above by $\left|\mathcal{L}^M\right|$ and the co-dimension is bounded above by
  $\left|\mathcal{W}^m\right|$.
\end{lemma}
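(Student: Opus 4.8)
The plan is to establish both bounds by explicit, mutually dual constructions. For the dimension bound I would attach to every maximal losing coalition $T\in\mathcal{L}^M$ the weighted game $v_T:=\left[\,1;w_1^T,\dots,w_n^T\,\right]$ in which $w_i^T=1$ if $i\notin T$ and $w_i^T=0$ if $i\in T$. A coalition $S$ is then winning in $v_T$ exactly when it contains at least one player outside $T$, i.e.\ when $S\not\subseteq T$. First I would check that each $v_T$ is a legitimate simple game: since $v(N)=1$ while $T$ is losing we have $T\neq N$, so some player lies outside $T$ and hence $v_T(N)=1$; and $v_T(\myemptyset)=0$ because the empty coalition has weight $0<1$; the quota $1$ is strictly positive and the weights are non-negative.

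Next I would show $v=\bigwedge_{T\in\mathcal{L}^M}v_T$. Because $v(\myemptyset)=0$, the set $\mathcal{L}^M$ is nonempty, so the intersection is well defined. If $S$ is winning in $v$ then, by monotonicity, $S$ is contained in no losing coalition, in particular in no $T\in\mathcal{L}^M$, hence $S$ is winning in every $v_T$ and therefore in the intersection. Conversely, if $S$ is losing in $v$, then---using finiteness of $N$ and monotonicity---$S$ is contained in some maximal losing coalition $T$, so $S$ is losing in $v_T$ and thus in the intersection. Hence the intersection equals $v$ and uses $\left|\mathcal{L}^M\right|$ weighted games, giving the dimension bound.

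For the co-dimension I would run the dual argument: to each minimal winning coalition $S\in\mathcal{W}^m$ attach the weighted game $u_S:=\left[\,|S|;w_1^S,\dots,w_n^S\,\right]$ with $w_i^S=1$ for $i\in S$ and $w_i^S=0$ otherwise, so that a coalition $R$ is winning in $u_S$ iff $R\supseteq S$. One checks as before that each $u_S$ is a simple game---here $S\neq\myemptyset$ since $v(\myemptyset)=0$, so the quota $|S|$ is strictly positive, and $u_S(N)=1$ because $w^S(N)=|S|$---that $\mathcal{W}^m\neq\myemptyset$ because $v(N)=1$, and that $R$ is winning in $\bigvee_{S\in\mathcal{W}^m}u_S$ precisely when $R$ contains some minimal winning coalition, i.e.\ precisely when $v(R)=1$. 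Hence $v=\bigvee_{S\in\mathcal{W}^m}u_S$, which bounds the co-dimension by $\left|\mathcal{W}^m\right|$.

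I do not expect a genuine obstacle here; the only points deserving a word of care are the verifications that the constructed weighted games satisfy $v(\myemptyset)=0$, $v(N)=1$ and carry a strictly positive quota, together with the elementary, finiteness-based fact that every losing coalition is contained in a maximal losing one and every winning coalition contains a minimal winning one.
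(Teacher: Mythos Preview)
Your proposal is correct and follows essentially the same approach as the paper: both use the weighted game $[1;\chi_{N\setminus T}]$ for each $T\in\mathcal{L}^M$ and the weighted game $[|S|;\chi_S]$ for each $S\in\mathcal{W}^m$, then verify that the intersection (respectively union) over these coincides with $v$. If anything, you are slightly more careful than the paper in explicitly checking that each constructed weighted game is a genuine simple game and that the index sets are nonempty.
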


\begin{proof}
  For each coalition $S\in \mathcal{L}^M$ we set $q^S=1$, $w_i^S=0$ for all $i\in S$ and $w_i^S=1$ otherwise. Note that
  $\mathcal{L}^M\neq \myemptyset$ since $\myemptyset$ is a losing coalition. With this $w^S(S)=0<q^S$. However, for all $T\subseteq N$
  with $T\not\subseteq S$ we have $w(T)\ge 1=q^S$. Thus, we have $v=\bigwedge_{S\in\mathcal{L}^M} \left[q^S;w_1^S,\dots,w_n^S\right]$.
  Similarly, for each $S\in\mathcal{W}^n$ we set $\widetilde{q}^S=|S|$, $\widetilde{w}_i^S=1$ for all $i\in S$ and $w_i^S=0$ otherwise.
  Note that $\mathcal{W}^m\neq \myemptyset$ since $N$ is a winning coalition. With this $\widetilde{w}^S(S)=\widetilde{q}^S$. However, for all
  $T\subseteq N$ with $S\not\subseteq T$ we have $w(T)<\widetilde{q}^S$. Thus, we have
  $v=\bigvee_{S\in\mathcal{W}^m} \left[\widetilde{q}^S;\widetilde{w}_1^S,\dots,\widetilde{w}_n^S\right]$.
\end{proof}

Let $\Phi=\{u_1,\dots,u_k\}$ be a set of weighted games, interpreted as Boolean variables, and let $\varphi$ be a \emph{monotone Boolean formula} over $\Phi$, i.e., a well-formed formula of propositional logic over $\Phi$ which uses parentheses and the operators $\wedge$ and $\vee$ only.
The \emph{size} $|\varphi|$ of 
formula $\varphi$ is the number of
variable occurrences, i.e., the number of $\wedge$ and $\vee$ operators plus one. For instance, the size of
$u_1\vee(u_1\wedge u_2)$ is 3. 

\begin{definition}
  \label{def_boolean_dimension}
The \emph{Boolean dimension} of a simple game $v$ is the smallest integer $m$ such that there exist $k\le m$ weighted games $u_1, \ldots, u_k$ and a monotone Boolean formula $\varphi$ of size $|\varphi|=m$ satisfying $\varphi(u_1,\dots,u_k)=v$.
\end{definition}

Clearly, the Boolean dimension of $v$ is at most the minimum of $v$'s dimension and co-dimension.
Because combinations of $\wedge$ with $\vee$ have a size of at least 3, the Boolean dimension must exceed 2 whenever the dimension and co-dimension do.
The dimension can be exponential in the Boolean dimension of a simple game \cite[Thm.~4]{BooleanCombinations}; the Boolean dimension of a simple game can be exponential in the number of players \cite[Cor.~2]{BooleanCombinations}.

\section{Lisbon voting rules in EU Council}
\label{subsec_EU28}
We now formalize the provisions (i)--(iii) for decision making by the EU Council (see Sec.~\ref{sec:Intro}).
The membership requirement~(i)~-- approval of at least $16=\left\lceil 0.55\cdot 28\right\rceil$ member states~-- is easily reflected by the weighted game
$v_1=[16;1,\dots,1]$.
The population requirement~(ii) could be represented by using the official population counts as weights and 65\% of the total population as quota (see Table~\ref{table_pop_data_eu}).
\begin{table}[t]
  \begin{center}
    \setlength\tabcolsep{3pt}
    \begin{tabular}{rlrrp{8pt}rlrr}
      \hline
      \# & Member state & Population & $w_2$ && \# & Member state & Population & $w_2$\\
      \hline
       1 & Germany        & 80\,780\,000 & 4\,659\,052&& 16 & Bulgaria       &   7\,245\,677 &   417\,900\\
       2 & France         & 65\,856\,609 & 3\,798\,333&& 17 & Denmark        &   5\,627\,235 &   324\,556\\
       3 & United Kingdom & 64\,308\,261 & 3\,709\,031&& 18 & Finland        &   5\,451\,270 &   314\,406\\
       4 & Italy          & 60\,782\,668 & 3\,505\,689&& 19 & Slovakia       &   5\,415\,949 &   312\,369\\
       5 & Spain          & 46\,507\,760 & 2\,682\,373&& 20 & Ireland        &   4\,604\,029 &   265\,541\\
       6 & Poland         & 38\,495\,659 & 2\,220\,268&& 21 & Croatia        &   4\,246\,700 &   244\,932\\
       7 & Romania        & 19\,942\,642 & 1\,150\,208&& 22 & Lithuania      &   2\,943\,472 &   169\,767\\
       8 & Netherlands    & 16\,829\,289 & 970\,643&& 23 & Slovenia        &   2\,061\,085 &   118\,875\\
       9 & Belgium        & 11\,203\,992 & 646\,199&& 24 & Latvia          &   2\,001\,468 &   115\,436\\
      10 & Greece         & 10\,992\,589 & 634\,006&& 25 & Estonia         &   1\,315\,819 &    75\,890\\
      11 & Czech Republic & 10\,512\,419 & 606\,312&& 26 & Cyprus          &    858\,000 &    49\,486\\
      12 & Portugal       & 10\,427\,301 & 601\,403&& 27 & Luxembourg      &    549\,680 &    31\,703\\
      13 & Hungary        &  9\,879\,000 & 569\,780&& 28 & Malta           &    425\,384 &    24\,535\\
      14 & Sweden         &  9\,644\,864 & 556\,276&&    &                 &           &         \\
      15 & Austria        &  8\,507\,786 & 490\,693&&    & Total             & 507\,416\,607 & 2\,9265\,662        \\
      \hline
    \end{tabular}
    \caption{EU population 01.01.2014 (
     http://ec.europa.eu/eurostat); minimum sum integer weights of $v_2$} 
    \label{table_pop_data_eu}
  \end{center}
\end{table}
Its computationally more convenient minimum sum integer representation is given by
$v_2=[q; \mathbf{w}_2]$ with $q=19\,022\,681$ and the weights indicated in the $w_2$-columns of Table~\ref{table_pop_data_eu}.\footnote{We remark that \emph{rounding} 
populations to, say, thousands is common in applied work because this simplifies computations, e.g., of the voting power distribution in the EU Council. Rounding, however, 
leads to a different set of winning coalitions, i.e., is analyzing `wrong' rules.} 
%
%
The additional minimal blocking requirement (iii) can be described as $v_3=[25;1,\dots,1]$, since $28-4+1=25$ member states suffice to pass a proposal.
The Lisbon voting rule of the EU Council is then formally characterized as
$v_{\text{EU28}}=(v_1\wedge v_2)\vee v_3$ or $v_{\text{EU28}}=v_1\wedge (v_2\vee v_3)$.

The $268\,435\,456$ coalitions
of $v_{\text{EU28}}$ are partitioned into $30\,340\,718$ winning and $238\,094\,738$ losing coalitions. Of these, $8\,248\,125$ are minimal winning and $7\,179\,233$ maximal losing. So the dimension of $v_{\text{EU28}}$ must be below $7.18$ millions.

The influence partition of the Boolean combination of weighted games generally corresponds to the coarsest common refinement of the respective partitions in the constituent games. Here, 
there is only a single equivalence class of players in $v_1$ and $v_3$, respectively, while $v_2$ has $28$ equivalence classes (all minimum sum weights differ by at least 2). So each player forms its own equivalence class in $v_{\text{EU28}}$.
There are only $60\,607$ shift-minimal winning and $60\,691$ shift-maximal losing coalitions
in $v_{\text{EU28}}$.\footnote{For example, every $16$-member winning coalition is minimal but few are also shift-minimal.}

\section{Weightedness and bounding strategy}
\label{sec_weightedness}

\noindent
Determining whether a given simple game is weighted or not will be crucial for our analysis of $v_{\text{EU28}}$.
Answers can be given by combinatorial, algebraic or geometric methods (see \cite[Ch.~2]{SimpleGamesBook}). We will draw on the first two.

Combinatorial techniques usually invoke so-called `trades'.
A \emph{trading transform} for a simple game $v$ is a collection of coalitions $J=\langle S_1,\ldots, S_j;T_1,\ldots,T_j\rangle$ such that $\left|\{h\colon i\in S_h\}\right|=\left|\{h\colon i\in T_h\}\right|$ for all $i\in N$.
An \emph{$m$-trade} for $v$ is a trading transform with $j\le m$ such that all $S_h$ are winning and all $T_h$ are losing coalitions.
Existence of, say, a 2-trade $\langle S_1,S_2;T_1,T_2\rangle$ implies that the game cannot be weighted: $w(S_1), w(S_2)\ge q$ and $w(T_1), w(T_2)<q$ would contradict $w(S_1)+w(S_2)=w(T_1)+w(T_2)$.
The simple game $v$ is called \emph{$m$-trade robust} if no $m$-trade exists for it.
Taylor and Zwicker have shown that a simple game is weighted iff it is $m=2^{2^n}$-trade robust (see, e.g., \cite[Thm.~2.4.2]{SimpleGamesBook}). Sharper bounds for $m$ have been provided by \cite{gvozdeva2011weighted}, but the lower one is still linear and the upper exponential in $n$.

\begin{example}
  \label{ex_not_weighted}
  Consider the complete simple game $v$ with $N=\{1,2,3,4,5,$ $6\}$ and $$\mathcal{L}^M=\big\{\{1,3,5\}, \{1,3,6\},  \{1,4,5\}, \{1,4,6\}, \{2,3,5\}, \{2,3,6\}, \{2,4,5\}, \{2,4,6\}\big\}.$$
All coalitions in $\mathcal{L}^M$ are also shift-maximal losing,
but only coalitions $\{1,2\}$, $\{1,3,4\}$, $\{2,3,4\}$
  and $\{3,4,5,6\}$ of $$\mathcal{W}^m
=\big\{\{1,2\}, \{1,3,4\}, \{2,3,4\}, \{3,4,5,6\}, \{1,3,5,6\},  \{1,4,5,6\},  \{2,3,5,6\}, \{2,4,5,6\}\big\}$$
 are also shift-minimal winning. Since 
  $$
    \left\langle \{1,2\},  \{3,4,5,6\}; \{1,3,5\}, \{2,4,6\}\right\rangle
  $$
is a $2$-trade, $v$ is not weighted.\footnote{The example is the smallest possible: all complete simple games with $n\le 5$ are weighted.}
\end{example}

Algebraic methods exploit that a simple game $v$ is weighted iff the inequality system
$
  \sum_{i\in S} w_i \ge q\,\, \forall S\in\mathcal{W}^m\!, \,
  \sum_{i\in T} w_i \le  q-1\,\,\forall T\in\mathcal{L}^M\!,\,
  w_i\in\mathbb{R}_{\ge 0}\,\forall i\in N,\text{ and }
  q \in\mathbb{R}_{\ge 1}
$
admits a solution. 
Linear programming (LP) techniques can be applied.
In case that no solution exists, the dual multipliers provide a certificate of non-weightedness.
A suitable subset of the constraints~-- those for the minimal winning and some maximal losing coalitions, say~-- often suffice to conclude infeasibility and thus non-weightedness.

For a \emph{complete} simple game $v$ with sets $\mathcal{W}^{sm}$ and $\mathcal{L}^{sM}$ of shift-minimal winning and shift-maximal losing coalitions, the linear inequality system can further be simplified. Namely, $v$ is weighted iff
\begin{equation}
\begin{array}{l}\textstyle
  \sum\limits_{i\in S} w_i \ge q\,\, \forall S\in\mathcal{W}^{sm}\!, \ \
  \sum\limits_{i\in T} w_i \le  q-1\,\,\forall T\in\mathcal{L}^{sM}\!,\medskip\\
  w_i\ge w_j\in\mathbb{R}_{\ge 0}\,\forall i,j\in N\text{ with }i\sqsupset j, \ \
  w_i\in\mathbb{R}_{\ge 0}\,\forall i\in N \text{ and } q \in\mathbb{R}_{\ge 1}
\end{array}\label{LP_CSG}
\end{equation}
admits a solution.
Note that non-weightedness of $v$ says no more about $v$'s dimension than that it exceeds 1.

One might hope that it is possible to construct a representation of a complete simple game $v$ as the intersection of $\left|\mathcal{L}^{sM}\right|$ weighted games as follows:
look at one coalition $T_l\in \mathcal{L}^{sM}$ at a time; find a weighted game $v_l$ such that (a) $v_l(T_l)=0$ and (b) $v_l(S)=1$ for every $S\in \mathcal{W}^{sm}$ by ignoring all constraints $\sum_{i\in T'} w_i \le  q-1$ in system~(\ref{LP_CSG}) for  $T'\in\mathcal{L}^{sM}\setminus T_l$; finally obtain $v_1\wedge \ldots \wedge v_{|\mathcal{L}^{sM}|}$ as a representation of $v$.
Unfortunately, this does not work in general. For instance, we can infer from infeasibility of
$w_1+w_2\ge q$, $w_3+w_4+w_5+w_6\ge q$, $w_1+w_3+w_5\le q-1$, $w_1=w_2$, $w_3=w_4$ and $w_5=w_6$
that there exists no weighted game $v_1$ which respects the ordering condition $w_i\ge w_j \Longleftrightarrow i\sqsupset j $ and in which $T_1=\{1,3,5\}\in \mathcal{L}^{sM}$ is losing and (at least) $\{1,2\}$ and $\{3,4,5,6\}$ are winning (see Example~\ref{ex_not_weighted}).
Counter-examples exist also when no two players are equivalent.
The basic idea of this heuristic construction is still useful, and will be applied 
in order to provide an \emph{upper bound} on $v_{\text{EU28}}$'s dimension.
In order to establish a \emph{lower bound},
we will use 
\begin{observation}
Given a simple game $v$ with winning coalitions $\mathcal{W}$ and losing coalitions $\mathcal{L}$,
let $\mathcal{L}'=\left\{T_1,\dots,T_k\right\}
\subseteq \mathcal{L}$ be a set of losing coalitions with the following `pairwise incompatibility property': for each pair $\{T_i,T_j\}$ with $T_i\neq T_j\in \mathcal{L}'$ 
there exists \emph{no} weighted game in which all coalitions in $\mathcal{W}$ are winning while $T_i$ and $T_j$ are both losing.
Then if $v=\bigwedge_{1\le l\le m} v_l$ is the intersection of $m$ weighted games, we must have $m\ge k$, i.e., $v$'s dimension is at least $k$.
\end{observation}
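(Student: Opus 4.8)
\noindent
The plan is a pigeonhole argument resting on one elementary observation about intersections of games. Suppose $v=\bigwedge_{1\le l\le m} v_l$ with each $v_l$ weighted. The first thing I would make explicit is the fact that powers the whole proof: because the winning coalitions of $v_1\wedge\dots\wedge v_m$ are precisely $\mathcal{W}_1\cap\dots\cap\mathcal{W}_m$, every coalition $S\in\mathcal{W}$ is winning in \emph{every} constituent game, i.e.\ $v_l(S)=1$ for all $l\in\{1,\dots,m\}$. Dually, each $T_i\in\mathcal{L}'\subseteq\mathcal{L}$ is losing in $v$, so $v(T_i)=0$ forces $v_l(T_i)=0$ for at least one index $l$.

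Next I would use this to build a map. For each $i\in\{1,\dots,k\}$ pick one index witnessing that $T_i$ is losing somewhere and call it $f(i)\in\{1,\dots,m\}$, so that $v_{f(i)}(T_i)=0$ (the candidate indices form a finite nonempty set, so no choice principle beyond finite selection is needed). This defines a map $f\colon\{1,\dots,k\}\to\{1,\dots,m\}$. The key step is then to verify that $f$ is injective. Suppose not, so $f(i)=f(j)=l$ for some $i\neq j$; then $T_i\neq T_j$ since the elements of $\mathcal{L}'$ are distinct. The single weighted game $v_l$ now has all coalitions of $\mathcal{W}$ winning (by the first step) while $v_l(T_i)=v_l(T_j)=0$, i.e.\ both $T_i$ and $T_j$ are losing in $v_l$---exactly the configuration excluded by the pairwise incompatibility property of the pair $\{T_i,T_j\}$. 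This contradiction shows $f$ is injective, hence $k=|\mathcal{L}'|\le m$, which is the assertion.

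I do not anticipate a genuine obstacle: once one notes that $\mathcal{W}$ is contained in the winning set of every constituent $v_l$, the statement is a textbook instance of pigeonhole. The only points meriting a word of care are (a) invoking that \emph{all} of $\mathcal{W}$, not merely its minimal elements, is winning in each $v_l$---immediate from the intersection structure together with monotonicity---and (b) the degenerate cases $k\le 1$, where $m\ge k$ holds trivially because $m\ge 1$.
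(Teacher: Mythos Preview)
The paper states this result as an ``Observation'' without supplying a proof; it is treated as evident and merely remarked to generalize the construction in \cite{DimensionEUNice}. Your pigeonhole argument is exactly the natural way to make it explicit and is correct: from $\mathcal{W}=\bigcap_l\mathcal{W}_l$ every $S\in\mathcal{W}$ is winning in each $v_l$, each $T_i\in\mathcal{L}'$ must be losing in some $v_{f(i)}$, and injectivity of $f$ follows directly from the pairwise incompatibility hypothesis.
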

The observation generalizes the construction used in \cite{DimensionEUNice}.
A quick way to establish that there is no weighted game with $T_i$ and $T_j$ losing and all $S\in \mathcal{W}$ winning is to find a 2-trade $\left\langle S_1, S_2; T_i, T_j\right\rangle$ for some $S_1, S_2\in \mathcal{W}$.
Not finding a 2-trade does not guarantee that such weighted game exists; and checking for 3-trades, 4-trades, etc.\ gets computationally demanding.
However, in order to provide a lower bound $k$ for $v_{\text{EU28}}$'s dimension, it suffices to provide \emph{any} set $\mathcal{L}'$ of $k$ pairwise incompatible losing coalitions.
So one can focus on sets in which 2-trades are easily obtained for all ${k \choose 2}$ pairs, and improve the resulting bound 
by extending $\mathcal{L}'$ if needed.

We remark that it is possible to formulate the \emph{exact determination} of the
dimension of a simple game 
as a discrete optimization problem.
To this end let $\mathcal{C}$ collect all subsets $\mathcal{S}\subseteq \mathcal{L}^M$ 
with the property that there exists a weighted game where all elements of $\mathcal{W}$ are winning and all elements of
$\mathcal{S}$ are losing.
In particular, all singleton subsets of $\mathcal{L}^M$ are contained in $\mathcal{C}$ (cf.\ proof of Lemma~\ref{lemma_upper_bound_maximal_losing}); so is, e.g.,  $\{\{1,3,5\},\{1,3,6\},\{1,4,5\},\{1,4,6\}\}$ in Example~\ref{ex_not_weighted}, but not  $\{\{1,3,5\},\{2,4,6\}\}$.

Having constructed $\mathcal{C}$, the dimension of $v$ can be determined by finding a \emph{minimal covering} of $\mathcal{L}^M$, using the elements of $\mathcal{C}$.
Specifically, $v$'s dimension is the optimal value of $\min \sum_{\mathcal{S}\in \mathcal{C}}
x_{\mathcal{S}}$ subject to the constraints $\sum_{\mathcal{S}\in\mathcal{C}:\,T\in\mathcal{S}} x_{\mathcal{S}}\ge 1$
for all $T\in\mathcal{L}^M$ and $x_{\mathcal{S}}\in\{0,1\}$ for all $\mathcal{S}\in\mathcal{C}$. However, this set covering formulation is, in general, computationally intractable. For $v_{\text{EU28}}$, already the construction of $\mathcal{C}$ is out of reach because $\mathcal{L}^M$ has more than $2^{7.1\cdot 10^6}$ subsets.
We hence have to contend ourselves with lower and upper bounds which may be brought to identity at some point in the future.

\section{Bounds for $v_{\text{EU28}}$'s dimension}
\label{sec_bounds}

Since $v_{\text{EU28}}$ has so many maximal losing coalitions we have focused our search for a suitable set $\mathcal{L}'$ of pairwise incompatible losing coalitions on the subset  $\mathcal{L}_{23,24}\subset \mathcal{L}$ of losing coalitions with $23$ or $24$ members.
They fail the 65\% population and 25 member thresholds. For each
pair of these 4\,533 coalitions we have performed a greedy search for a $2$-trade. Specifically, let two such losing coalitions $T_i\neq T_j\in \mathcal{L}_{23,24}$ be given, set $I=T_i\cap T_j$, and then extend $I$ to a winning coalition $S_1$ with $25$ members
by choosing the least populous elements of $\left(T_i\cup T_j\right)\backslash I$.
Coalition $S_2$ is then defined by $(\left(T_i\cup T_j\right)\backslash S_1)\cup I$. If $S_2$ is winning, we have found a $2$-trade, i.e., pair $\left\{T_i,T_j\right\}$ satisfies the incompatibility criterion.
Marking this occurrence as an edge in a graph $\mathcal{G}$ with vertex set $\mathcal{L}_{23,24}$, we can perform a clique search on $\mathcal{G}$. It turns out that
$\mathcal{G}$ contains 
$12\,226\,400$
cliques of size $6$ but no larger clique.
One of the 6-cliques corresponds to $\mathcal{L}'=$

\begin{eqnarray*}
\Big\{ && \!\!\!\!\{3,4,5,6,7,8,12,13,14,15,16,17,18,19,20,21,22,23,24,25,26,27,28\},\\[-2mm]
  && \!\!\!\!\{2,4,5,6,7,9,10,11,13,14,15,17,18,19,20,21,22,23,24,25,26,27,28\},\\[-1mm]
  && \!\!\!\!\{2,3,5,6,8,9,10,11,12,15,16,17,18,19,20,21,22,23,24,25,26,27,28\},\\[-1mm]
  && \!\!\!\!\{2,3,4,7,8,9,10,11,12,13,14,16,18,19,20,21,22,23,24,25,26,27,28\},\\[-1mm]
  && \!\!\!\!\{1,4,5,7,8,9,10,11,12,13,14,15,16,17,20,21,22,23,24,25,26,27,28\},\\[-2mm]
  && \!\!\!\!\{1,3,6,7,8,9,10,11,12,13,14,15,16,17,18,19,22,23,24,25,26,27,28\}\,\Big\}.\footnote{}
\end{eqnarray*}\footnotetext{Just to give an example,
$\big\langle\{4,\dots,28\},
\{2,\dots,7,13,\dots,15,17,\dots,28\}; 
\{3,\dots,8,11,13,\dots,28\}, 
\{2,4,\dots,7,9, 10,,11,$ $13,\dots,15,17,\dots,28\}\big\rangle$ is a $2$-trade for the first two losing coalitions. Incorrect losing coalitions and wrong 
$\Vert \cdot \Vert_1$-distance were reported in the published version: S. Kurz and S. Napel (2016). Dimension of the Lisbon
voting rules in the EU Council: a challenge and new world record. Optimization Letters, 10(6), 1245–1256, doi:10.1007/s11590-015-0917-0, due to a labeling
inconsistency. The authors thank J\"org Aldag and Werner Kirsch for pointing out this error.}

This 6-clique is actually the most robust one regarding changes of the relative population distribution in the EU: it is not upset by moves between
states, births, or deaths as long as the new relative population vector $pop'$ and the old one, $pop$, based on Table~\ref{table_pop_data_eu}, have a $\Vert \cdot \Vert_1$-distance less than 
0.0068. This distance could accommodate arbitrary moves of up to 1.7~million EU citizens.
The robustness is noteworthy because high numbers in the minimum sum representation of $v_2$ indicate that $v_{\text{EU28}}$ is rather sensitive to population changes. 

The above set $\mathcal{L}'$ can be extended, without affecting robustness, by adding the maximal losing coalition $\{1, \ldots, 15\}$ of the 15 largest member states, which was excluded by the initial focus on $\mathcal{L}_{23,24}$. This establishes:
\begin{proposition}\label{prop_robust7}
Let $v$ be the simple game arising from $v_{\text{EU28}}$ by replacing the underlying relative population vector $pop$ by the relative population vector $pop'$. 
If $\Vert pop-pop'\Vert_1\le 0.68\%$
then $v$ has \emph{dimension at least $7$}.
\end{proposition}

An alternative for establishing a lower bound $d$ for $v_{\text{EU28}}$'s dimension is to replace the graph-theoretic search for 2-trades 
by a straightforward integer linear program (ILP) such as\footnote{For the general ILP modeling of weighted games we refer to \cite{InversePowerIndexProblem}.}
\begin{eqnarray*}
  \max \Delta\quad \,\,\text{s.t.}\quad \,\,
  \sum_{i=1}^{28} l_i^j \le 24\,\,\forall 1\!\le\! j\!\le\! d,
  \sum_{i=1}^{28} pop_i\cdot l_i^j \le 0.65-\Delta\,\,\forall 1\!\le\! j\!\le\! d\\
   \sum_{i=1}^{28} w_{i}^{j,h,1} \ge 25\,\,\forall 1\!\le\! j\!<\!h\le\! d,
  \sum_{i=1}^{28} pop_i\cdot w_{i}^{j,h,2} \ge 0.65+\Delta\,\,\forall 1\!\le\! j\!<\!h\!\le d\\
l_i^j\!+\!l_i^h=w_{i}^{j,h,1}\!+\!w_{i}^{j,h,2} \,\,\forall 1\!\le\! i\!\le\! 28, 1\!\le\! j\!<\!h\!\le\! d,
  l_i^j\!\in\!\{0,1\}\,\,\forall 1\!\le\! i\!\le\! 28, 1\!\le\! j\!\le\! d\\
\sum_{i=1}^{28} w_{i}^{j,h,2} \!\ge\! 16\,\forall 1\!\le\! j\!<\!h\le\! d, w_{i}^{j,h,k} \!\in\!\{0,1\}\,\,\forall 1\!\le\! i\!\le\! 28, 1\!\le\! j\!<\!h\!\le\! d, k\!\in\!\{1,2\}.
\end{eqnarray*}
This turned out to be impractical for $d>6$ but has yielded a simple, robust certificate for $d=3$, which will be useful for obtaining Corollary~\ref{cor_codimension} below:


\begin{proposition}\label{prop_robust3}
Let $v$ be the simple game arising from $v_{\text{EU28}}$ by replacing the underlying population vector $pop$ by the relative population vector $pop'$. If $\Vert pop-pop'\Vert_1\le 2.19\%$ then $v$ has dimension at least $3$.
\end{proposition}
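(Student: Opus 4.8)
The plan is to apply the Observation of Section~\ref{sec_weightedness} with $k=3$: we exhibit three losing coalitions $T_1,T_2,T_3$ of $v_{\text{EU28}}$ that are \emph{pairwise incompatible}, each incompatibility being witnessed by a $2$-trade $\langle S_1,S_2;T_i,T_j\rangle$ with $S_1,S_2$ winning, and we show that all of this survives any admissible perturbation of the population distribution. Such a configuration is exactly what the displayed ILP returns when run for $d=3$: the vectors $l^1,l^2,l^3$ encode the losing coalitions, the vectors $w^{j,h,1},w^{j,h,2}$ the two winning coalitions attached to the pair $(j,h)$, and the balance constraint $l_i^j+l_i^h=w_i^{j,h,1}+w_i^{j,h,2}$ guarantees the trading-transform property. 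Concretely I would record the optimal $d=3$ solution -- three coalitions of at most $24$ members, together with, for each of the three pairs, one $25$-member winning coalition and one population-heavy winning coalition of at least $16$ members -- and then verify it directly.

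The verification splits into routine parts. (a) Each $T_i$ has fewer than $25$ members, so it is losing in $v_3$, and its $pop$-share is at most $0.65-\Delta$ with $\Delta=1.095\%$ the ILP margin, so it fails the population threshold and is losing in $v_1\wedge v_2$; hence $T_i$ is losing in $v_{\text{EU28}}=(v_1\wedge v_2)\vee v_3$. (b) Each $25$-member coalition $S_1$ occurring in a trade is winning already through $v_3$, irrespective of the population vector. (c) Each population-heavy coalition $S_2$ has at least $16$ members and $pop$-share at least $0.65+\Delta$, hence wins through $v_1\wedge v_2$. (d) For each of the three pairs, $\langle S_1,S_2;T_i,T_j\rangle$ is a trading transform -- a purely combinatorial check on membership multiplicities that does not involve $pop$ -- so, $S_1,S_2$ being winning and $T_i,T_j$ losing, the pair $\{T_i,T_j\}$ is incompatible. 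By the Observation, $\dim v_{\text{EU28}}\ge 3$.

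For robustness, let $pop'$ be any relative population vector (a probability vector on $N$) with $\Vert pop-pop'\Vert_1\le 2.19\%$. Since $pop$ and $pop'$ both sum to $1$, for every coalition $T$ one has $\big|\sum_{i\in T}pop_i-\sum_{i\in T}pop'_i\big|\le\tfrac12\Vert pop-pop'\Vert_1\le 1.095\%=\Delta$ (the $\ell_1$-distance counts both the mass leaving and the mass entering $T$). Therefore, in the perturbed game $v$: each $T_i$ still has $pop'$-share $<0.65$ and fewer than $25$ members, so remains losing; each $S_1$ still has $25$ members, so remains winning; each $S_2$ still has at least $16$ members and $pop'$-share $>0.65$, so remains winning; and the trading-transform property is unaffected. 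Hence $T_1,T_2,T_3$ stay pairwise incompatible in $v$, and the Observation yields $\dim v\ge 3$.

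The one genuinely non-routine ingredient is the search producing coalitions that attain the margin $\Delta\approx 1.095\%$ (equivalently the $\ell_1$-radius $2\Delta=2.19\%$); this is delegated to the ILP and is what makes the stated radius essentially sharp rather than merely valid. The only subtlety in the write-up is the factor $\tfrac12$ linking an $\ell_1$-perturbation of a probability vector to the induced change of a coalition's population share.
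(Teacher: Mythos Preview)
Your plan is correct and matches the paper's approach exactly: the paper's proof consists precisely of exhibiting an ILP certificate of three losing coalitions whose pairs admit $2$-trades (with the two winning partners in each trade drawn from $v_3$ and from $v_1\wedge v_2$ respectively), and the $2.19\%$ radius arises from the ILP margin $\Delta$ via the factor-$\tfrac12$ relation between $\Vert pop-pop'\Vert_1$ and coalition-share change that you identify. The paper simply records the three coalitions without spelling out the verification or the robustness argument you have written out.
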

\begin{proof}
Three losing coalitions whose pairs can be completed to a $2$-trade are: 
\begin{eqnarray*}
&& \{1,4,5,7,8,9,11,\dots,14,16,\dots,26,28\},\,\,
\{3,\dots,6,8,9,10,12,14,\dots,24,26,27\},\text{ and}\\
&&\{2,4,\dots,8,10,11,13,15,17,\dots,20,22,\dots,25,27,28\}.
\end{eqnarray*}
\end{proof}
%
%

\medskip

In order to bring down the baseline upper bound of $\left|\mathcal{L}^{M}\right|\approx 7.18$~mio.\ for $v_{\text{EU28}}$'s dimension (Lemma~\ref{lemma_upper_bound_maximal_losing}), we
draw on LP formulation (\ref{LP_CSG}) and the indicated idea to check for each $T_l \in \mathcal{L}^{sM}$ whether inequality system~(\ref{LP_CSG}) with $\mathcal{L}^{sM}$ replaced by $\{T_l\}$ has a feasible solution.
This yields weighted games for $57\,869$ out of $\left|\mathcal{L}^{sM}\right|=60\,691$ coalitions.
The remaining $2\,822$ \emph{stubborn} shift-maximal losing coalitions correspond to exactly
$17\,003$ maximal losing coalitions, which are not yet covered by the identified weighted games.
We could apply the construction in the proof of Lemma~\ref{lemma_upper_bound_maximal_losing} to these and would obtain an upper bound of $74\,872$.

This, however, is easily improved by the following procedure:
(I) try to greedily cover 
many shift-maximal losing coalitions 
with a few selected weighted games; (II) find a weighted game $v_j$ for each still uncovered and non-stubborn $T_j \in \mathcal{L}^{sM}$; (III) 
deal with the maximal losing coalitions related to all stubborn $T_k$.
We utilized the following ILP in order to iteratively find helpful games in step (I)
\begin{eqnarray*}
  \max \sum_{T\in\mathcal{L}''} x_T \text{ s.t. }%
  x_T\in\{0,1\} \forall T\in\mathcal{L}'',
  w_i\ge w_{i+1}\,\, \forall 1\le i\le 27,
  \sum_{i=1}^{28} w_i\le M ,
  \\
    \sum_{i\in S} w_i\ge q \,\forall S\in\mathcal{W}^{sm},
  \sum_{i\in T} w_i\le q\!-\!1\! +\!(1\!-\!x_T)M,
  w_i,q\in \mathbb{N}\,\forall 1\!\le\! i\!\le\! 28.
  \end{eqnarray*}
This ILP  exploits that $1\sqsupset  \ldots \sqsupset 28$ in $v_{\text{EU28}}$, the 
constant $M$ is chosen so as to give integer weights with suitable magnitude (e.g., thousands), and $\mathcal{L}''$ is the part of $\mathcal{L}^{sM}$ which is still uncovered 
or a subset thereof.
It is possible, for instance, to cover $34\,323$ shift-maximal losing coalitions in step (I) with just $10$ weighted games. Adding more weighted games to these,
the \emph{lowest upper bound} which we have obtained so far is $\UpperBoundDimensionSeperator$. The 
games and a checking tool can be obtained from the authors. 

All of these 
considerations can easily be translated to the \emph{co-dimension}.
There, we have to consider unions of weighted games, where all coalitions in $\mathcal{L}^M$ are losing and the winning coalitions in $\mathcal{W}^m$ end up being covered by a suitable selection of constituent games.
We skip the 
details for space reasons.

\begin{proposition}\label{prop_codimension}
  Let $v$ be the simple game arising from $v_{\text{EU28}}$ by replacing the underlying relative  population vector $pop$ by
  the relative population vector $pop'$. If $\Vert pop-pop'\Vert_1\le 5\%$ then $v$ has \emph{co-dimension at least $7$}.
\end{proposition}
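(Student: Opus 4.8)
\textbf{Proof plan for Proposition~\ref{prop_codimension}.}
The plan is to mirror, on the dual side, the strategy used for the dimension lower bounds (Proposition~\ref{prop_robust7} and Proposition~\ref{prop_robust3}). By the Observation, dualized: if $v=\bigvee_{1\le l\le m} v_l$ is the union of $m$ weighted games, and $\mathcal{W}'=\{S_1,\dots,S_k\}\subseteq\mathcal{W}$ is a set of winning coalitions such that for every pair $\{S_i,S_j\}$ there is \emph{no} weighted game in which all coalitions in $\mathcal{L}$ are losing while $S_i$ and $S_j$ are both winning, then $m\ge k$, i.e.\ the co-dimension is at least $k$. (Each winning coalition must be "caught" by some constituent game, and no constituent game can catch two mutually incompatible ones.) A quick sufficient condition for incompatibility of $\{S_i,S_j\}$ is the existence of a $2$-trade $\langle S_i,S_j;T_i,T_j\rangle$ with $T_i,T_j\in\mathcal{L}$: then $w(S_i),w(S_j)\ge q$ and $w(T_i),w(T_j)<q$ contradict $w(S_i)+w(S_j)=w(T_i)+w(T_j)$.

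First I would restrict attention to a convenient slice of $\mathcal{W}$, e.g.\ minimal winning coalitions of small cardinality (the natural dual counterpart of the $\mathcal{L}_{23,24}$ slice used for the dimension), say winning coalitions with exactly $16$ members that also clear the $65\%$ population bar by a comfortable margin; these are the "tightest" winning coalitions and the most robust to population perturbations. For each pair $\{S_i,S_j\}$ in this pool I would run the same greedy $2$-trade search: set $I=S_i\cap S_j$, split the symmetric-difference population as evenly as possible into two halves, form $T_i,T_j$ by attaching the two halves to $I$, and check that both $T_i$ and $T_j$ are losing (too few members, or too little population, or both). Marking success as an edge of a graph on this vertex set and running a clique search should again produce a clique; I expect a clique of size at least $7$, analogous to the $6$-clique found for the dimension, possibly augmentable by one further "extreme" winning coalition (the dual analogue of adding $\{1,\dots,15\}$). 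The robustness claim $\Vert pop-pop'\Vert_1\le 5\%$ is then obtained by recording, for each of the $\binom{k}{2}$ $2$-trades, how much population slack it has and taking the minimum; choosing the $2$-trades to maximize this slack (rather than merely to exist) yields the stated $5\%$ threshold. Because the population requirement (ii) enters the definition of $v_{\text{EU28}}$ only through $v_2$, and the $16$-member and $25$-member thresholds are untouched by population changes, perturbing $pop$ to $pop'$ only moves the population side of each inequality, and a uniform $\ell_1$-margin on all the trades transfers directly into the claimed tolerance.

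The main obstacle I anticipate is not the combinatorics of the clique search — that is mechanical once the candidate pool is fixed — but rather \emph{making the robustness bound honest}. A $2$-trade certifies incompatibility only when $T_i$ and $T_j$ are genuinely losing, and under a population perturbation a coalition that fails the $65\%$ bar by a hair could slip over it; so for each trade in the chosen clique I must verify a strict population inequality with quantified slack, and the final threshold is the bottleneck trade. Concretely I would want each $T_i$ used in the clique to either fail the $25$-member threshold (which is perturbation-proof) or to fail the population threshold by more than $5\%$ of total population, and simultaneously each $S_i$ in the clique to clear the population threshold by more than $5\%$; balancing these competing demands while keeping a $7$-clique is where the real search effort lies. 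A secondary, minor point: one should confirm that restricting the incompatibility test to $2$-trades (rather than general non-weightedness) loses nothing here — it does not, since for a lower bound it suffices to exhibit \emph{any} pairwise-incompatible family, and the Observation is stated in exactly that one-directional form.
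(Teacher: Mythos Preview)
Your plan is correct and matches the paper's approach: the paper's proof consists precisely of exhibiting seven winning coalitions (all with exactly $16$ members, as you anticipated) such that every pair can be completed to a $2$-trade with losing coalitions, invoking the dualized Observation. One small correction to your robustness discussion: ``failing the $25$-member threshold'' is not by itself perturbation-proof losing (a coalition with $16\le|T|\le 24$ is losing only if it also fails the population test), so the population slack must be checked on \emph{both} the $S_i$ and the $T_i$ that have $\ge 16$ members; otherwise your reasoning is sound, and the paper indeed obtains the $7$-clique directly rather than via a $6{+}1$ augmentation.
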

\begin{proof}
Seven winning coalitions whose pairs can be completed to a $2$-trade are:\\
  $\{2,\dots,5,7, 8,9,11,\dots,15,17,\dots,20\}$,
  $\{1,2,3,6,8,\dots,15,17,18,19,25\}$,\\
  $\{1,3,5,\dots,16,19,20\}$
  $\{1,2,5,\dots,17,22\}$,
  $\{1,2,4,5,7,9,\dots,15,19,23,24,26\}$,\\
  $\{1,2,4,6,8,\dots,16,18,20,21\}$,
  and
  $\{1,2,3,5,7,10,\dots,15,18,20,21,22,28\}$.
\end{proof}
The combination of Propositions~\ref{prop_robust3}
and \ref{prop_codimension} yields:
\begin{corollary}\label{cor_codimension}
  Let $v$ be the simple game arising from $v_{\text{EU28}}$ by replacing the underlying relative  population vector $pop$ by
  the relative population vector $pop'$. If $\Vert pop-pop'\Vert_1\le 2.19\%$ then $v$ has \emph{Boolean dimension exactly $3$}.
\end{corollary}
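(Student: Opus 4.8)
The plan is to combine the lower bounds that have already been established in Propositions~\ref{prop_robust3} and~\ref{prop_codimension} with the explicit three-part Boolean representation exhibited in Section~\ref{subsec_EU28}. Recall from Definition~\ref{def_boolean_dimension} that the Boolean dimension is the smallest size $|\varphi|$ of a monotone Boolean formula $\varphi$ over weighted games with $\varphi(u_1,\dots,u_k)=v$. Since $v_{\text{EU28}}=v_1\wedge(v_2\vee v_3)$ with $v_1,v_2,v_3$ weighted, and the same Boolean identity defines $v$ when the population weights of $v_2$ are replaced by any $pop'$-based weights, the formula $\varphi=u_1\wedge(u_2\vee u_3)$ of size $3$ witnesses that the Boolean dimension of $v$ is at most $3$, for \emph{every} population vector $pop'$ (the hypothesis on $\Vert pop-pop'\Vert_1$ is not even needed for the upper bound). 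So the entire content of the corollary lies in the matching lower bound.

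First I would invoke the observation, already noted in the excerpt just after Definition~\ref{def_boolean_dimension}, that the Boolean dimension of a simple game exceeds $2$ whenever both its dimension and co-dimension exceed $2$: any monotone formula of size $1$ or $2$ is either a single variable, a conjunction $u_1\wedge u_2$, or a disjunction $u_1\vee u_2$, hence realises only weighted games (size $1$), games of dimension $\le 2$ (size $2$, conjunction), or games of co-dimension $\le 2$ (size $2$, disjunction). Therefore it suffices to show that, under $\Vert pop-pop'\Vert_1\le 2.19\%$, the game $v$ has dimension at least $3$ and co-dimension at least $3$. The first is exactly Proposition~\ref{prop_robust3}. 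For the second, Proposition~\ref{prop_codimension} gives co-dimension at least $7$ under the weaker hypothesis $\Vert pop-pop'\Vert_1\le 5\%$, which in particular covers the range $\Vert pop-pop'\Vert_1\le 2.19\%$; so co-dimension at least $3$ holds a fortiori on the relevant ball. Combining, the Boolean dimension is both $\ge 3$ and $\le 3$, hence exactly $3$, whenever $\Vert pop-pop'\Vert_1\le 2.19\%$.

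The one genuine gap to fill is the size-$2$ case analysis: I must be careful that a size-$2$ formula is, up to the commutativity of $\wedge$ and $\vee$, \emph{only} one of the two forms $u_1\wedge u_2$ or $u_1\vee u_2$ — a monotone formula of size $2$ has exactly two variable occurrences joined by exactly one operator, and repetition of a variable ($u_1\wedge u_1$) only gives a weighted game again, so nothing new arises. I would also spell out that ``dimension $\le 2$'' for $u_1\wedge u_2$ and ``co-dimension $\le 2$'' for $u_1\vee u_2$ follow directly from the definitions of dimension and co-dimension as intersections resp.\ unions of weighted games. This is the main (though still routine) obstacle: it is bookkeeping about small monotone formulas rather than anything substantive, since all the hard combinatorics has been done in the two cited propositions.

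\begin{proof}
The Boolean identity $v_{\text{EU28}}=v_1\wedge(v_2\vee v_3)$, with $v_1=[16;1,\dots,1]$, $v_3=[25;1,\dots,1]$ and $v_2$ the weighted population game, continues to hold when the population weights of $v_2$ are replaced by weights derived from $pop'$; the associated monotone formula $\varphi=u_1\wedge(u_2\vee u_3)$ has size $3$, so the Boolean dimension of $v$ is at most $3$ for every $pop'$.

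For the lower bound, suppose $\Vert pop-pop'\Vert_1\le 2.19\%$ and that $v=\varphi(u_1,\dots,u_k)$ for some weighted games $u_i$ and a monotone formula $\varphi$ with $|\varphi|\le 2$. If $|\varphi|=1$ then $\varphi$ is a single variable and $v=u_1$ is weighted, contradicting that $v$ has dimension at least $3$ by Proposition~\ref{prop_robust3}. If $|\varphi|=2$ then $\varphi$ consists of two variable occurrences joined by a single operator, so up to commutativity $v=u_a\wedge u_b$ or $v=u_a\vee u_b$ for some (not necessarily distinct) $a,b$. In the first case $v$ is the intersection of two weighted games, so its dimension is at most $2$, again contradicting Proposition~\ref{prop_robust3}. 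In the second case $v$ is the union of two weighted games, so its co-dimension is at most $2$; but $\Vert pop-pop'\Vert_1\le 2.19\%\le 5\%$, so Proposition~\ref{prop_codimension} gives co-dimension at least $7$, a contradiction. Hence no monotone formula of size $\le 2$ represents $v$, so the Boolean dimension of $v$ is at least $3$.

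Combining the two bounds, the Boolean dimension of $v$ equals $3$ whenever $\Vert pop-pop'\Vert_1\le 2.19\%$.
\end{proof}
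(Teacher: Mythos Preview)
Your proof is correct and takes essentially the same approach as the paper: the paper simply states that the corollary follows from combining Propositions~\ref{prop_robust3} and~\ref{prop_codimension}, relying implicitly on the explicit size-$3$ representation $v_1\wedge(v_2\vee v_3)$ for the upper bound and on the remark after Definition~\ref{def_boolean_dimension} that Boolean dimension exceeds $2$ whenever both dimension and co-dimension do. You have faithfully unpacked this argument, including the routine size-$2$ case analysis.
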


We remark that is not too hard to determine $2\,000$ winning coalitions such that each pair can be completed to a $2$-trade. So the co-dimension of $v_{\text{EU28}}$ with populations exactly as in Table~\ref{table_pop_data_eu} is at least $2\,000$.

\section{Concluding remarks}
\label{sec_conclusion}

Simple game $v_3$ rules out that three of the EU's ``Big Four'' (see Table~\ref{table_pop_data_eu}) can cast a veto in the Council. This has 
very minor consequences for the mapping of different voting configurations to a collective ``yes'' or ``no'':
the disjunction with
$v_3$ adds a mere $10$ to the $30\,340\,708$ coalitions which are already winning in $v_1\wedge v_2$.
Prima facie, provision~(iii) should therefore have only symbolic influence on the distribution of voting power in the Council.\footnote{In order to check this intuition, we have computed the difference $\left\Vert \mathcal{P}(v_{\text{EU28}})-\mathcal{P}(v_1\wedge v_2)\right\Vert_1$ for four different power measures $\mathcal{P}$ (cf.\ \cite{alonso2014least}):
it is only around $7\cdot 10^{-7}$ for the least square nucleolus and $9\cdot 10^{-7}$ for the normalized Banzhaf index, but $0.00537$ for the Shapley-Shubik index and $0.167$ for the nucleolus.}
Quite surprisingly, however, provision~(iii) has tremendous effect on the conjunctive dimensionality of the rules.
Namely, the EU Council sets a new world record, among the political institutions that we know of: 
the dimension of its decision rule is at least 7. 

The link to classical set covering problems in optimization which we have identified and partly exploited in Sections~\ref{sec_weightedness} and \ref{sec_bounds} implies that there exist algorithms which should~-- at least in theory~-- terminate with an answer to the simple question: what is the dimension of $v_{\text{EU28}}$?
In practice, heuristic methods which establish and improve bounds are needed.
The suggested mix of combinatorial and algebraic techniques, integer linear programming and graph-theoretic methods has rather general applicability. It also lends itself to robustness considerations, which we hope will become more popular in the literature.
(A potentially negative referendum on EU membership in the UK and a consequent exit, for instance, would leave our lower bounds intact.)

The drawback of our relatively general approach is that the resultant upper bound of $\UpperBoundDimensionSeperator$ 
is still pretty high;
the record lower bound of 7 may not be the final word either. Alternative approaches, which might use unexploited specifics of $v_{\text{EU28}}$, 
will potentially lead to much sharper boundaries in the future.

The certification of better dimension bounds is a problem which we would here like to advertise to the optimization community.
The application of meta-heuristics, such as simulated annealing and genetic algorithms, or column generation techniques could be promising.
The ultimate challenge is, of course, to determine the \emph{exact dimension} of the group decision rule in the EU Council.



\end{document}